\documentclass[11pt, letterpaper]{article}
\usepackage{amsmath, amsthm, amssymb, amsfonts} 
\usepackage{mathrsfs} 
\usepackage{bm} 
\usepackage{graphicx} 
\usepackage{enumerate} 
\usepackage{geometry} 
\usepackage{setspace} 
\usepackage{lmodern} 
\usepackage{hyperref} 
\usepackage{color} 
\usepackage{xcolor} 
\usepackage{url} 
\usepackage{mathtools}
\usepackage{enumitem}
\usepackage{changepage}
\usepackage{microtype}
\usepackage{authblk}
\usepackage{amsfonts}
\usepackage{mathrsfs,amscd,amssymb,amsthm,amsmath,bm,graphicx,psfrag,subfigure,url,mathtools}
\usepackage{pict2e}
\usepackage{psfrag,amsmath}
\usepackage{tikz}
\usepackage{indentfirst}
\usepackage{hyperref}
\usepackage{bookmark}
\usepackage{enumerate}
\usepackage{latexsym,euscript,epic,eepic,color}
\usepackage{multirow}
\usepackage{multicol}
\usepackage{longtable}
\usepackage{adjustbox}

\usepackage{setspace}
\usepackage{epstopdf}
\allowdisplaybreaks
\usepackage{authblk}
\usepackage{pifont}

\theoremstyle{plain}
\newtheorem{theorem}{Theorem}[section]          
\newtheorem{lemma}{Lemma}[section]              
\newtheorem{claim}{Claim} 

\theoremstyle{definition}

\newcommand{\ar}{\operatorname{ar}}
\newcommand{\ex}{\operatorname{ex}}

\newcommand{\calR}{\mathcal R}

\hypersetup{
    colorlinks=true,
    linkcolor=blue,
    citecolor=red,
    urlcolor=magenta,
}

\newcommand{\keywords}[1]{%
  \par\vspace{6pt}\noindent\textbf{Keywords: }#1\par
}

\newcommand{\MSC}[2][2020]{%
  \par\vspace{3pt}\noindent\textbf{MSC(#1): }#2\par
}

\title{Anti-Ramsey Number of Intersecting Odd Cycles}
\vspace{6mm}

\author{Haojie Zheng\thanks{Corresponding author. \\
\hspace*{2em}E-mail address: zhj9536@126.com (H. Zheng)}}

\affil{School of Mathematics and Statistics, and Hubei Key Lab--Math. Sci.,\linebreak Central China Normal University, Wuhan 430079, China}

\date{\today}

\allowdisplaybreaks

\begin{document}
\baselineskip=0.23in

\maketitle

\begin{abstract}
For a graph $H$, the anti-Ramsey number $\operatorname{ar}(n,H)$ is the maximum number of colors in an edge-coloring of $K_n$
containing no rainbow copy of $H$, where a copy is rainbow if its edges have pairwise distinct colors.
Let $s,t$ be nonnegative integers with $s+t\ge2$, and let $H_{s,t}$ be a graph consisting of $s$ triangles and $t$ odd
cycles of fixed lengths at least $5$, all sharing exactly one common vertex and otherwise pairwise vertex-disjoint.
Liu et al. (2024) determined $\operatorname{ar}(n,H_{s,0})$ for $s\ge3$ and $n\ge50s^2$.
In this paper, we determine the exact value of $\operatorname{ar}(n,H_{s,t})$ for every fixed $H_{s,t}$
with $t\ge1$ and all sufficiently large $n$.
\end{abstract}

\keywords{Anti-Ramsey number; Intersecting odd cycles; Tur\'an number.}
\MSC{05C15; 05C35}

\section{Introduction}

All graphs are finite and simple. For a graph $G$, write $e(G)=|E(G)|$, and let $\delta(G)$, $\Delta(G)$ and
$\chi(G)$ denote its minimum degree, maximum degree and chromatic number, respectively.
A matching is a set of pairwise vertex-disjoint edges, and the matching number $\nu(G)$ is the maximum size of a matching in $G$.
For $A\subseteq V(G)$ and $x\in V(G)$, let $G[A]$ be the induced subgraph, and write $N_A(x)=N_G(x)\cap A$
and $d_A(x)=|N_A(x)|$, where $N_G(x)$ is the neighborhood of $x$. We abbreviate $E_G(A)=E(G[A])$ and
$e_G(A)=e(G[A])$.

For disjoint $A,B\subseteq V(G)$, let $E_G(A,B)$ be the set of edges between $A$ and $B$, and write
$e_G(A,B)=|E_G(A,B)|$ and $G[A,B]=(A\cup B,E_G(A,B))$.
A \textit{maximum cut} is a bipartition $V(G)=V_0\cup V_1$ maximizing $e_G(V_0,V_1)$.
For an edge set $M$, let $V(M)$ denote its endpoints, with $V(e)=\{u,v\}$ for an edge $e=uv$.
For a positive integer $k$, write $[k]=\{1,\ldots,k\}$.

For $A\subseteq V(G)$ and $x\in V(G)$, define $E_A(x)=\{uv\in E(G[A]):\{u,v\}\cap N_G(x)\ne\emptyset\}$,
and let $\nu(E_A(x))$ denote the maximum size of a matching contained in $E_A(x)$.

A graph is called \textit{$H$-free} if it contains no subgraph isomorphic to $H$.
The Tur\'an number $\operatorname{ex}(n,H)$ is the maximum number of edges in an $H$-free graph on $n$ vertices.
An $n$-vertex $H$-free graph with $\operatorname{ex}(n,H)$ edges is called an extremal graph for $H$, 
and the family of all such graphs is denoted by $\operatorname{EX}(n,H)$.
Let $T_2(n)$ be the complete bipartite graph on $n$ vertices whose part sizes differ by at most one, and put $t_2(n)=e(T_2(n))$.

An edge-colored graph is called rainbow if its edges have pairwise distinct colors.
For a fixed graph $H$, the anti-Ramsey number $\operatorname{ar}(n,H)$ is the maximum number of colors in an edge-coloring of $K_n$ containing no rainbow copy of $H$.
Thus,
\[
\operatorname{ar}(n,H)=
\max\bigl\{|c(E(K_n))|:c\text{ is an edge-coloring of }K_n \text{ containing no rainbow copy of }H \bigr\}.
\]
Anti-Ramsey problems were introduced by Erd\H{o}s, Simonovits, and S\'os~\cite{ESS1975}.

A useful connection between anti-Ramsey and Tur\'an problems is provided by representing graphs.
Given an edge-coloring $c$ of $K_n$, a representing graph is a spanning subgraph obtained by choosing exactly one
edge of each color. We denote the family of all representing graphs by $\mathcal R(c,K_n)$.
Every $G\in\mathcal R(c,K_n)$ is rainbow and satisfies $e(G)=|c(E(K_n))|$.
Consequently, if $c$ contains no rainbow copy of $H$, then every representing graph is $H$-free.
In particular, $\operatorname{ar}(n,H)\le \operatorname{ex}(n,H)$.

We consider graphs formed by odd cycles sharing a common vertex.
For integers $s\ge0$ and $t\ge1$, let $H_{s,t}$ be a graph consisting of $s$ triangles and $t$ odd cycles of lengths
at least $5$, all sharing exactly one common vertex and otherwise pairwise vertex-disjoint.
The common vertex is called the \textit{center} of $H_{s,t}$.

The Tur\'an problem for graphs of this form has been studied in several settings.
Let $F_r$ be the friendship graph consisting of $r$ triangles sharing a common vertex and otherwise vertex-disjoint.
Erd\H{o}s et al.~\cite{EFGG1995} determined $\operatorname{ex}(n,F_r)$ and characterized the extremal graphs
for sufficiently large $n$. Chen et al.~\cite{CGPW2003} obtained analogous results for intersecting cliques.
Hou et al.~\cite{HQL2016} subsequently determined the Tur\'an number and the extremal graphs for $r$ cycles of the same odd
length at least $5$ sharing a common vertex. 
They later determined the Tur\'an number and characterized all extremal graphs for $H_{s,t}$~\cite{HouQiuLiu}.

Related anti-Ramsey results have also been obtained for graphs whose constituent subgraphs share a common vertex.
Liu et al.~\cite{LLL2024} determined the anti-Ramsey numbers of friendship graphs.
Lu et al.~\cite{LLM2025} subsequently determined the corresponding numbers for graphs formed by cliques of the same order sharing
a common vertex. For further anti-Ramsey results, we refer to \cite{Alon1983,JW2003,JL2009,MBNL2005,WZLX2023}
for cycles and collections of vertex-disjoint triangles, and to~\cite{AJK2004} for even cycles in complete bipartite graphs. 
In my previous work, I determined the exact anti-Ramsey number of $tC_{2k+1}$, the disjoint union of $t$ cycles of length $2k+1$.
These results motivate the corresponding problem for intersecting odd cycles, allowing both triangles and longer odd cycles.
My main result is as follows.



\begin{theorem}\label{thm:main}
Let $s\geq 0$ and $t\geq 1$ be fixed integers with $s+t\geq2$. There exists $n_0(H_{s,t})\in \mathbb{N}$ such that for all $n\geq n_0(H_{s,t})$,
\[
  \ar(n, H_{s,t})= t_2(n) + (s+t-2)^2 + 1.
\]

\end{theorem}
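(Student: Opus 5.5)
We prove the two inequalities separately, writing $k=s+t$ for the number of cycles through the center.

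\textbf{Lower bound.} We exhibit a coloring with $t_2(n)+(k-2)^2+1$ colors and no rainbow $H_{s,t}$. Take a balanced bipartition $V(K_n)=V_0\cup V_1$ and give every cross edge its own color. Inside $V_0$, fix a set $S$ of $2(k-2)+1$ vertices and color $K[S]$ so that it contributes $(k-2)^2$ new colors while every other edge inside $V_0$ or $V_1$ receives one further extra color $\alpha$. A natural way to realize this is to take, inside $S$, a complete bipartite graph $K_{k-2,k-1}$ with all edges distinctly colored, and to color everything else inside the parts with $\alpha$; this count is $(k-2)(k-1)$, so the design has to be adjusted to give exactly $(k-2)^2$, for instance $K_{k-2,k-2}$ together with one extra vertex. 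The point to verify is parity. Every odd cycle must use an odd number of non-cross edges. A rainbow copy of $H$ uses at most one $\alpha$-edge, so at least $k-1$ of its $k$ cycles must each use an odd number of the special edges inside $V_0$. Since the cycles are disjoint away from the center, each of these cycles must then consume at least two vertices of the small special structure, excluding the center. That requires about $2(k-1)$ vertices of a structure with only about $2(k-2)+1$, which is a contradiction. The exact extremal construction should mirror the Tur\'an extremal graphs of Hou, Qiu and Liu~\cite{HouQiuLiu} (where $\ex(n,H_{s,t})=t_2(n)+(k-1)^2$ type terms appear), decreased by one cycle plus one extra color. I would calibrate the construction against their extremal structure.

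\textbf{Upper bound.} Suppose $c$ uses $t_2(n)+(k-2)^2+2$ colors and has no rainbow $H_{s,t}$. Take $G\in\calR(c,K_n)$. Then $G$ is $H_{s,t}$-free with at least $t_2(n)+(k-2)^2+2$ edges. By the stability version of the Tur\'an result for $H_{s,t}$, $G$ is close to $T_2(n)$. Take a maximum cut $V_0\cup V_1$ of $G$. Standard cleaning shows that all but $O(1)$ vertices have almost full degree across the cut, and that the bad vertices can be handled. Let $B$ be the set of edges of $G$ inside the parts; then $|B|\ge (k-2)^2+2+(\text{missing cross edges})$. The key step is to exploit the freedom of representing graphs. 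Any cross edge missing from $G$ has a color appearing elsewhere, so we may swap representatives, and in particular we may assume that $G$ contains the edges we need. Using the almost-complete bipartite structure, any vertex $x$ together with a matching in $E_{V_i}(x)$ of suitable size yields odd cycles of any prescribed lengths through $x$, by routing through high-degree vertices. So $G$ being $H_{s,t}$-free forces $\nu(E_{V_i}(x))\le k-1$ for each $x$, counting with care.

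\textbf{Main obstacle.} The bound $\ex$ alone gives only $(k-1)^2$-type surplus. To gain the extra $-(2k-3)$, one must use a second representing graph, obtained by swapping the representative of one inside edge for another edge of the same color, and show that both cannot be $H_{s,t}$-free. Concretely, I would show that $B$ must essentially be the Tur\'an-extremal inner structure: a near-$K_{k-1,k-1}$ or a star-like configuration through few vertices with matching number exactly $k-1$. Then I would show that some inner edge's color class contains a cross edge, or another inner edge, whose substitution creates a rainbow $H_{s,t}$, using the $k$ cycles through the center built from $k-1$ inner edges plus one swapped edge. This forces the count of distinct inner colors down to $(k-2)^2+1$, giving the contradiction. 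I expect this case analysis on the structure of $B$ to be the hardest part, and would first treat the case where all inner edges lie inside one part.
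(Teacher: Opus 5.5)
\textbf{The main gap is the upper bound.} Your sketch stops where the real work begins. Suppose we grant your cleaning step. You assert it; the paper obtains it by repeatedly deleting a vertex of degree at most $\lfloor n/2\rfloor-1$ in some representing graph, which loses at most $\lfloor n/2\rfloor-1$ colors and so raises the surplus over $t_2$ by one per step, until every representing graph has $\delta\ge\lfloor n/2\rfloor$. Even then, you only know that each part induces a graph with $\Delta,\nu\le k-1$.

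Your plan to show that the inner edges ``must essentially be the Tur\'an-extremal inner structure, a near-$K_{k-1,k-1}$'' cannot work. A representing graph need only have $t_2(n)+(k-2)^2+2$ edges, which is fewer than $\ex(n,H_{s,t})=t_2(n)+(k-1)^2$ when $k\ge3$. So the inner part of a single representing graph is far from determined. Likewise, ``find a substitution creating a rainbow $H_{s,t}$, forcing the inner colors down to $(k-2)^2+1$'' restates the theorem rather than proving it.

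The missing idea is a reduction to the Tur\'an problem for $k-1$ cycles. The paper's steps are:
\begin{enumerate}[label=(\alph*)]
\item Let $S_i$ be the set of vertices of $V_i$ joined in $G$ to all of $V_{1-i}$; then $|S_i|=n/2-O_k(1)$.
\item If $K_n[S_i]$ contained a rainbow $k$-matching, swapping it into $G$ and taking a common neighbour in $V_{1-i}$ as the center (Lemma~\ref{lem:211}) would give a rainbow $H_{s,t}$. Hence, after removing a maximal rainbow matching, a linear-size part of $S_i$ spans at most $k-1$ colors. It therefore contains a linear-size monochromatic matching $M_i$ of some color $\alpha_i$.
\item Deleting the representatives of $\alpha_0$ and $\alpha_1$ leaves a graph $G_0$ with at least $t_2(n)+(k-2)^2$ edges. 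Let $\widehat H$ be $H_{s,t}$ minus a shortest cycle. Then $G_0$ is $\widehat H$-free: a copy of $\widehat H$ centered in $V_i$ could be closed into a rainbow $H_{s,t}$ by an edge of $M_{1-i}$ far from it, routed through fully joined vertices.
\item Lemma~\ref{thm:HQL} applied to $\widehat H$ forces $G_0$ to be exactly $T_2(n)$ plus $K_{k-2,k-2}$ (or $3K_3$), with $\alpha_0\neq\alpha_1$.
\item Adding back one edge of each $M_i$ then yields a rainbow $H_{s,t}$.
\end{enumerate}
This reduction is where the term $(k-2)^2$ comes from, and nothing in your proposal plays that role.

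\textbf{Lower bound.} Once you settle on a distinctly colored $K_{k-2,k-2}$ inside one part, your construction is the paper's; the extra vertex is unnecessary. However, your verification is wrong as written. If the center lies in the special structure, a cycle can use a single inner edge at the center and so meet only one further vertex of the structure. Then $k-1$ cycles need only $k-1\le 2(k-2)$ such vertices, and there is no contradiction.

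The fix is to use the two sides $A,B$ of $K_{k-2,k-2}$. In the distinctly colored graph $G$, both $G-A$ and $G-B$ are bipartite, so every odd cycle of $G$ meets both $A$ and $B$. The center avoids one side, say $A$. The at least $k-1$ cycles lying entirely in $G$ then meet $A$ in $k-1$ distinct vertices, whereas $|A|=k-2$.
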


The remainder of the paper is organized as follows. Section~2 records
the extremal and structural results used in the proof. Section~3 establishes a structural lemma for
$H_{s,t}$-free graphs with large minimum degree and proves the anti-Ramsey bound when every representing
graph has large minimum degree. Section~4 proves Theorem~\ref{thm:main}.

\section{Preliminaries}

Let $T_2(n)$ be the balanced complete bipartite graph on $n$ vertices, and let $t_2(n)=e(T_2(n))$. We shall repeatedly use
$t_2(n)-t_2(n-1)=\left\lfloor\frac n2\right\rfloor$.
Indeed, $T_2(n)$ is obtained from $T_2(n-1)$ by adding one vertex to a smallest part, and the new vertex has degree $\lfloor n/2\rfloor$.

For nonnegative integers $\nu$ and $\Delta$, define $f(\nu,\Delta)=\max\{e(J):\nu(J)\le\nu,\ \Delta(J)\le\Delta\}$.

\begin{lemma}[\cite{ChvatalHanson}]\label{thm:CH}
For every $\nu\geq 1$ and $\Delta\ge1$,
\[
  f(\nu,\Delta)=\nu\Delta+
  \left\lfloor\frac{\Delta}{2}\right\rfloor
  \left\lfloor
  \frac{\nu}{\lceil\Delta/2\rceil}
  \right\rfloor
  \le \nu(\Delta+1).
\]
\end{lemma}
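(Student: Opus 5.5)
The equality is the theorem of Chvátal and Hanson~\cite{ChvatalHanson}, and I would import it as stated rather than reprove it: its proof is a Gallai--Edmonds / Berge--Tutte type analysis of edge-maximal graphs with prescribed matching number and maximum degree. What remains is the upper bound $f(\nu,\Delta)\le \nu(\Delta+1)$, which is the only form we use later. I would give two short arguments for it: one derived from the formula, and one independent of it, so that the bound does not rest on the precise shape of the formula.

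\emph{From the formula.} Since $\lfloor \Delta/2\rfloor\le\lceil\Delta/2\rceil$ and $\lfloor x\rfloor\le x$, we get
\[
\left\lfloor\frac{\Delta}{2}\right\rfloor\left\lfloor\frac{\nu}{\lceil\Delta/2\rceil}\right\rfloor
\le \left\lceil\frac{\Delta}{2}\right\rceil\cdot\frac{\nu}{\lceil\Delta/2\rceil}=\nu .
\]
Adding $\nu\Delta$ gives $f(\nu,\Delta)\le\nu\Delta+\nu=\nu(\Delta+1)$. This uses $\Delta\ge1$, so that $\lceil\Delta/2\rceil\ge1$ and the quotient is defined.

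\emph{Directly.} Let $J$ be any graph with $\nu(J)\le\nu$ and $\Delta(J)\le\Delta$. By Vizing's theorem, $E(J)$ can be partitioned into at most $\Delta+1$ matchings. Each of these matchings has at most $\nu(J)\le\nu$ edges, so $e(J)\le\nu(\Delta+1)$. Taking the maximum over all such $J$ gives $f(\nu,\Delta)\le\nu(\Delta+1)$.

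There is no real obstacle here. The only nontrivial input is the exact formula, which we take from~\cite{ChvatalHanson}, and the inequality we actually need follows in one line by either route.
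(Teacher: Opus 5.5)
Your proposal is correct and matches the paper, which gives no proof of its own. The paper imports the formula from Chv\'atal and Hanson and treats $f(\nu,\Delta)\le\nu(\Delta+1)$ as the immediate consequence you derive in your first argument, using $\lfloor\Delta/2\rfloor\le\lceil\Delta/2\rceil$ and $\lfloor x\rfloor\le x$. Your Vizing argument is a valid independent route to the bound. Downstream the paper only needs $f(k-1,k-1)$ to be a constant depending on $k$, and that argument supplies this without relying on the exact formula.
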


\begin{lemma}[\cite{HouQiuLiu}]
\label{thm:HQL}
Let $s\ge0$ and $t\ge1$ be fixed integers. Then for sufficiently large $n$,
\[
  \ex(n, H_{s,t})=t_2(n)+(s+t-1)^2.
\]
Moreover, every extremal graph is obtained from $T_2(n)$ by embedding $K_{s+t-1,s+t-1}$
in one part, with the additional possibility of embedding $3K_3$ when $(s,t)=(3,1)$. 
\end{lemma}

\begin{lemma}[\cite{erdos1968}, \cite{simonovits1968}]\label{es}
Let $H$ be a graph with $\chi(H) = r \geq 3$ and $H \neq K_r$.
Then, for every $\varepsilon > 0$, there exist $\eta > 0$ and $n_0 = n_0(H, \varepsilon) \in \mathbb{N}$ such that the following holds.
If $G$ is an $H$-free graph on $n \ge n_0$ vertices with $e(G) \ge \operatorname{ex}(n, H) - \eta n^2$,
then there exists a partition of $V(G) = V_1 \cup \cdots \cup V_{r-1}$ such that
$\sum\limits_{i=1}^{r-1} e(V_i) <\varepsilon n^2$.
\end{lemma}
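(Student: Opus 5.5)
The statement just above is Lemma~\ref{es}, the Erd\H{o}s--Simonovits stability theorem, so it comes from the literature (\cite{erdos1968,simonovits1968}) rather than being proved here. Still, the standard route is the following. Let $r=\chi(H)\ge3$ and $h=|V(H)|$. By the Erd\H{o}s--Stone theorem, $\ex(n,H)=\bigl(1-\frac1{r-1}\bigr)\binom n2+o(n^2)$. So the hypothesis says that $G$ is $H$-free with density within $\eta$ of the Tur\'an density. The plan is to reduce to the case of large minimum degree, find a large balanced complete $r$-partite-minus-one structure, and show that almost every vertex attaches to it like a vertex of $T_{r-1}(n)$.

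\textbf{Step 1: minimum degree.} Repeatedly delete a vertex whose degree is below $\bigl(1-\frac1{r-1}-\sqrt\eta\bigr)$ times the current order. If more than $\sqrt\eta\, n$ vertices get deleted, the remaining graph would have edge density exceeding the Tur\'an density by a constant. Erd\H{o}s--Stone would then force a copy of $H$, a contradiction. So we pass to $G'$ on $n'\ge(1-\sqrt\eta)n$ vertices with high minimum degree. The deleted vertices contribute at most $\sqrt\eta\, n^2$ edges, and these can later be placed into any part.

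\textbf{Step 2: a large $K_{r-1}(m)$.} Since $G'$ is dense, it contains a complete $(r-1)$-partite graph $K_{r-1}(m)$ with parts $U_1,\dots,U_{r-1}$, where $m\to\infty$ slowly (for example $m=c\log n$). This follows from Erd\H{o}s--Stone applied with $K_{r-1}(m)$, or from the K\H{o}v\'ari--S\'os--Tur\'an iteration.

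\textbf{Step 3: classify vertices by adjacency to the $U_i$.} Counting degrees into $U=\bigcup U_i$, most vertices $v$ have about $(r-2)m$ neighbours in $U$. Call $v$ \emph{good} if it has at least $h$ neighbours in every $U_i$. A good vertex, combined with the $U_i$, yields $K_r(h)$ minus most edges at one vertex, and then a copy of $H$. Here I use supersaturation: the $U_i$ are large, so pigeonholing over $h$-subsets lets us find many such good vertices sharing common neighbourhoods, which builds $K_r(h)\supseteq H$. Hence the number of good vertices is at most $\varepsilon' n$. Every other vertex misses some $U_i$ almost entirely; put it into $V_i$ for the index $i$ it misses most.

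\textbf{Step 4: count edges inside the parts.} Compare $e(G)$ with the bound $\sum_{i<j}|V_i||V_j|+\sum_i e(V_i)$. If $\sum_i e(V_i)\ge\varepsilon n^2$, then some $V_i$ contains $\Omega(n^2)$ edges. Since vertices of $V_i$ are almost complete to the other $U_j$, we obtain a $K_2(h)$ inside $V_i$ whose vertices have $\ge h$ common neighbours in each $U_j$, $j\neq i$. This again gives $K_r(h)\supseteq H$.

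\textbf{Main obstacle.} The delicate point is Steps 3--4: making the common-neighbourhood pigeonholing work. One has to choose $m$ large enough that the common-neighbourhood sets of size $h$ inside $U_j$ are forced by counting, while keeping $m$ small enough that Step 2 still applies. The cleanest alternative is to invoke the removal lemma together with the stability of $K_r$-free graphs (F\"uredi's argument). Since the paper cites this result, I would simply quote it.
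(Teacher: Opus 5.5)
The paper does not prove this lemma; it quotes it from Erd\H{o}s and Simonovits. Your decision to cite it is therefore exactly what the paper does. Your sketch goes further than the paper: it is the classical self-contained proof (clean to high minimum degree, embed a $K_{r-1}(m)$, classify vertices by their neighbourhoods in it). It works, but a few details need tightening.

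In Step~1 the cutoff is slightly too small. After deleting $\ell$ low-degree vertices you keep at least $(1-\frac1{r-1})\frac{(n-\ell)^2}{2}+\sqrt\eta\,\frac{\ell(2n-\ell)}{2}-\eta n^2-O(n)$ edges. For $\ell=\sqrt\eta\,n$ the gain is $\eta n^2-\frac12\eta^{3/2}n^2$, which does not cover the deficit $\eta n^2$, so the bookkeeping does not close. Stopping at $\ell=2\sqrt\eta\,n$ fixes this.

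In Step~4, being placed in $V_i$ only says that $v$ has fewer than $h$ neighbours in $U_i$. It does not make $v$ almost complete to the other $U_j$, and that property holds only for all but $O(\eta^{1/4})n$ vertices. You get it from Markov's inequality applied to the nonnegative deficit $(r-2)m+h-d_U(v)$ of non-good vertices. By your degree count into $U$, this deficit has average $O(\sqrt\eta\,m)$ once good vertices are known to be $o(n)$ and $h\le\sqrt\eta\,m$. Discard the atypical vertices; they touch at most $O(\eta^{1/4})n^2$ edges. Then any $K_{h,h}$ inside some $V_i$ given by K\H{o}v\'ari--S\'os--Tur\'an has at least $m-O(h\eta^{1/4})m\ge h$ common neighbours in each $U_j$, $j\ne i$. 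This yields $K_r(h)\supseteq H$.

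The ``main obstacle'' you flag is not one. Plain pigeonhole over the $\binom{m}{h}^{r-1}$ choices of $h$-subsets bounds the number of good vertices by $h\binom{m}{h}^{r-1}=o(n)$. This works for $m=c\log n$, or even for a large constant $m$ depending on $h$ and $\eta$, and needs no supersaturation.

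In short, citing buys brevity and full rigour, while your sketch buys self-containedness once these points are filled in.
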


Set $\gamma=[400(c(H_{s,t})+1)(s+t)]^{-2}$, where $c(H_{s,t})$ denotes the circumference of $H_{s,t}$.

\begin{lemma}[\cite{HouQiuLiu}]\label{lem:211}
Let $s\ge 0$, $t\ge 1$ and $k=s+t$. Let $G$ be a graph on $n$
vertices satisfying the following:
\begin{enumerate}[label=\textup{(\roman*)}]
\item $V_0\cup V_1$ is a partition of $V(G)$ with
  $\max\{|V_0|,|V_1|\}\le\bigl(\tfrac12+\sqrt{\gamma}\bigr)n$,
  and, for each $i=0,1$, $V_i$ has a subset $B_i$ with
  $E(G[B_i])=\varnothing$ and
  $|B_0\cup B_1|<\sqrt{\gamma}\,n$, and
  \[
  d_{V_{1-i}}(u)>
  \begin{cases}
  2n/5, & \text{if }u\in V_i\setminus B_i,\\
  n/9,  & \text{if }u\in B_i;
  \end{cases}
  \]
\item $V_i$ has a subset $U_i$ with
  $|V_i\setminus U_i|<\sqrt{\gamma}\,n$ for $i=0,1$.

\item there exists a vertex $x\in B_i$ with $d_{V_i}(x)\ge k$, or a
vertex $x\in V_i\setminus B_i$ and a matching $M_{1-i}$ of
$G[N_{V_{1-i}}(x)\setminus B_{1-i}]$ with
$d_{V_i}(x)+|M_{1-i}|\ge s$ and
\begin{equation}\label{long}
d_{V_i}(x)+|M_{1-i}|+\nu\bigl(G[V_i\setminus N_{V_i}(x)]\bigr)+\nu\!\left(E_{V_{1-i}\setminus V(M_{1-i})}(x)\right)\ge k,
\end{equation}
\end{enumerate}
then for sufficiently large $n$, there is a copy of $H_{s,t}$, say
$H$, in $G$ centered at $x$, satisfying that
\begin{enumerate}[label=\textup{(\arabic*)}]
\item $H$ contains exactly $k$ edges in $E(V_0)\cup E(V_1)$,
\item $V(H)\cap U_{1-i}\ne\varnothing$,
\item if $d_{V_i}(x)\ge k$, then $V(H)\cap U_j\ne\varnothing$ for $j=0,1$.
\end{enumerate}
\end{lemma}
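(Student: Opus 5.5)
The idea is to build the copy of $H_{s,t}$ greedily around $x$. Each of the $k=s+t$ cycles is made to contain \emph{exactly one} edge of $E(V_0)\cup E(V_1)$, and all its other edges go across $G[V_0,V_1]$. A closed walk that crosses the cut an even number of times plus one inner edge has odd length. So any cycle consisting of one prescribed inner edge together with a path in $G[V_0,V_1]$ of the right parity is automatically odd, and it can be given any prescribed odd length $\ell\ge 5$, as well as length $3$ when its inner edge is incident to or spanned by $N(x)$. Conclusion (1) then holds by construction. Since $H$ has constantly many vertices, the whole construction is greedy: at each step we have used $O(1)$ vertices, and every set we choose from has linear size.

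\textbf{Degree tool.} For $u\in V_j\setminus B_j$ and $v\in V_j$ with $d_{V_{1-j}}(v)>n/9$, the bound $|V_{1-j}|\le(\tfrac12+\sqrt\gamma)n$ gives
\[
|N_{V_{1-j}}(u)\cap N_{V_{1-j}}(v)|>\bigl(\tfrac25+\tfrac19-\tfrac12-\sqrt\gamma\bigr)n=\Omega(n),
\]
and for two vertices outside $B_j$ the common neighbourhood is even $\ge(3/10-\sqrt\gamma)n$. Because $|B_0\cup B_1|<\sqrt\gamma n$ and $|V_j\setminus U_j|<\sqrt\gamma n$, we may always pick the new vertex inside $(U_{1-j}\setminus B_{1-j})\setminus(\text{used})$. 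Hence between any two vertices of $V_0\cup V_1$, at least one of which is outside $B$ (or both of degree $>n/9$ via an intermediate high-degree vertex), we can find bipartite paths of every length of the appropriate parity $\ge 2$ (or $\ge1$ when a common-neighbour argument applies). All interior vertices are fresh, outside $B$, and may be taken in $U$. In particular this gives (2) and (3): we deliberately place an interior vertex in $U_{1-i}$ and, in the case $d_{V_i}(x)\ge k$, also one in $U_i$ (possible because each of the long cycles has interior vertices on both sides; for a triangle we choose its third vertex in $U_{1-i}$).

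\textbf{Case $x\in B_i$, $d_{V_i}(x)\ge k$.} Pick $y_1,\dots,y_k\in N_{V_i}(x)$. Since $B_i$ is independent, $y_j\notin B_i$, so $d_{V_{1-i}}(y_j)>2n/5$, while $d_{V_{1-i}}(x)>n/9$. For a triangle, choose $z_j\in N_{V_{1-i}}(x)\cap N_{V_{1-i}}(y_j)$; this set is linear by the computation above. For a cycle of length $\ell\ge5$, close $x y_j$ with a fresh bipartite path $y_j\to z_j$ of odd length $\ell-2$, where $z_j\in N_{V_{1-i}}(x)\setminus B_{1-i}$.

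\textbf{Case $x\in V_i\setminus B_i$.} Inner edges come from four disjoint sources, matching the four terms of (\ref{long}):
\begin{enumerate}[label=(\alph*)]
\item edges $xy$ with $y\in N_{V_i}(x)$;
\item edges $ab\in M_{1-i}$, which lie in $N_{V_{1-i}}(x)\setminus B_{1-i}$;
\item a matching in $G[V_i\setminus N_{V_i}(x)]$;
\item a matching in $E_{V_{1-i}\setminus V(M_{1-i})}(x)$.
\end{enumerate}
Sources (a) and (b) yield triangles ($xyz$ with $z$ a common neighbour, and $xab$), and the hypothesis $d_{V_i}(x)+|M_{1-i}|\ge s$ lets us assign all $s$ triangles to them. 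The remaining $t$ long cycles take the remaining inner edges from (a)--(d), using (\ref{long}).

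For (c), an edge $uv$ with $u,v\notin N(x)$ gives the cycle $x\,w\,u\,v\,\cdots\,z\,x$. Here $w\in N_{V_{1-i}}(x)\cap N(u)$; this set is linear since $d_{V_{1-i}}(x)>2n/5$ and $d_{V_{1-i}}(u)>n/9$. The path $v\to z$ is bipartite and fresh of the right length, so the cycle has length $\ge5$.

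For (d), an edge $ab$ with $a\in N(x)$ gives $x\,a\,b\,\cdots\,z\,x$. Its length is $\ge5$ because we route $b$ to a fresh neighbour $z$ of $x$ through an intermediate vertex. If $a,b$ are both in $N(x)$ we use whichever endpoint is convenient.

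\textbf{Main obstacle.} The main obstacle I expect is the bookkeeping for low-degree endpoints: inner edges from (c) and (d) may have an endpoint in $B$, which has only $n/9$ cross-degree. The fix is to always attach such an endpoint to a vertex of degree $>2n/5$ (or to $x$ itself) and apply the $\tfrac25+\tfrac19>\tfrac12+\sqrt\gamma$ estimate. A second concern is that the matchings from different sources could share vertices with each other or with previously chosen path vertices. The sources are vertex-disjoint by definition, since $N_{V_i}(x)$ and $V_i\setminus N_{V_i}(x)$ are disjoint and (d) avoids $V(M_{1-i})$. If an edge from (d) meets a vertex already used, it is simply replaced by the corresponding triangle/path choice or a different matching edge. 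Since every choice is made from sets of linear size, for $n$ large this always succeeds.
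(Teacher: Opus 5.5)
The paper gives no proof of this lemma; it is quoted from \cite{HouQiuLiu}, so there is no in-text argument to compare yours with. Judged on its own, your greedy construction is a correct proof and is the natural argument for the cited result. You choose $k$ inner edges from the four sources counted in \eqref{long}, with at least $s$ of them from $N_{V_i}(x)$ and $M_{1-i}$ (possible since $k\ge s$). Each becomes the unique inner edge of one cycle, and each cycle is closed through fresh vertices of $G[V_0,V_1]$ via common neighbourhoods of size at least $(\tfrac1{90}-O(\sqrt\gamma))n$. Parity then gives odd cycles of the prescribed lengths, and conclusion (1) follows immediately.

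Two details need tightening.
\begin{itemize}
\item Your justification of (3), ``each long cycle has interior vertices on both sides'', fails for a $5$-cycle built on a source-(c) edge $uv$. In $x\,w\,u\,v\,z\,x$ the only fresh vertices $w,z$ lie in $V_{1-i}$, and $u,v$ need not lie in $U_i$. The fix: when $d_{V_i}(x)\ge k$, take all $k$ inner edges from source (a). A long cycle $x\,y\,p_1\,p_2\cdots z\,x$ then contains a fresh $p_2\in V_i$, which you may choose in $U_i$, and $t\ge1$ guarantees such a cycle exists.
\item Fix all $k$ inner edges before choosing any connecting vertex. Your fallback of ``replacing'' a (d)-edge by another matching edge is not available in general, since \eqref{long} may hold with equality. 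With the inner edges fixed first, the situation never arises: edges from different sources are vertex-disjoint, and those from source (a) share only $x$.
\end{itemize}

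A smaller point: your blanket claim about connecting paths of every admissible length $\ge2$ is false for length $2$ between two vertices of $B_j$ on the same side, since $\tfrac n9+\tfrac n9<\tfrac n2$. No such connection occurs in your construction, however. Every common-neighbour step involves one of:
\begin{itemize}
\item $x\notin B_i$;
\item a neighbour $y\in N_{V_i}(x)$ of some $x\in B_i$, which is outside $B_i$ by independence;
\item a vertex you deliberately chose outside $B$.
\end{itemize}
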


\section{The high minimum-degree case}

\begin{lemma}\label{lem:structure}
Let $H_{s,t}$ be fixed with $k=s+t\ge 3$. Let $G$ be an $H_{s,t}$-free graph on $n$ vertices satisfying $\delta(G)\ge \left\lfloor\frac{n}{2}\right\rfloor$. Then, for sufficiently large $n$, $G$ admits a bipartition $V(G)=V_0\cup V_1$
such that $\Delta(G[V_i])\le k-1$ and $\nu(G[V_i])\le k-1$ for every $i\in\{0,1\}$.
\end{lemma}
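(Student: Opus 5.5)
We take a maximum cut $V(G)=V_0\cup V_1$ and show that it already has the required properties. The argument has four steps: find a near-bipartite partition, pass to a maximum cut and identify a small exceptional set, use the minimum degree to rule out exceptional vertices, and finally apply Lemma~\ref{lem:211} to bound $\Delta(G[V_i])$ and $\nu(G[V_i])$.

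\emph{Step 1: a near-bipartite partition.} Since $G$ is $H_{s,t}$-free, Lemma~\ref{thm:HQL} gives $e(G)\le t_2(n)+(k-1)^2$. The hypothesis $\delta(G)\ge\lfloor n/2\rfloor$ gives $e(G)\ge n\lfloor n/2\rfloor/2\ge t_2(n)-O(n)$. So $e(G)\ge \ex(n,H_{s,t})-\eta n^2$ for large $n$. Since $\chi(H_{s,t})=3$ and $H_{s,t}\ne K_3$ (because $k\ge 3$), Lemma~\ref{es} applies with $\varepsilon$ chosen small relative to $\gamma$. It yields a partition with $e(V_0)+e(V_1)<\varepsilon n^2$.

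\emph{Step 2: the maximum cut and the sets $B_i$, $U_i$.} Replace this partition by a maximum cut $V_0\cup V_1$; its number of inside edges is still at most $\varepsilon n^2$. Counting edges shows $\max|V_i|\le(1/2+\sqrt\gamma)n$. Let $B_i$ be the vertices of $V_i$ with $d_{V_{1-i}}\le 2n/5$, and take $U_i=V_i\setminus B_i$.

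Each vertex of $B_i$ has at least $n/2-2n/5-O(1)$ neighbours inside $V_i$, because $\delta(G)\ge\lfloor n/2\rfloor$. The inside edges therefore number at least about $|B_i|\cdot n/20$, so $|B_i|=O(\varepsilon n)<\sqrt\gamma n$. By maximality of the cut, every vertex has $d_{V_{1-i}}(u)\ge d_{V_i}(u)\ge n/4-O(1)>n/9$.

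\emph{Step 3: $B_0\cup B_1=\emptyset$.} Suppose some $x\in B_i$ exists. Then $x$ has $d_{V_i}(x)\ge \Omega(n)\ge k$, and $B_i$ is nearly independent. Condition (iii) of Lemma~\ref{lem:211} then holds with this $x$ (after shrinking $B_i$ to an independent subset if needed). The lemma produces a copy of $H_{s,t}$, a contradiction. So $B_0=B_1=\emptyset$, and every vertex has more than $2n/5$ neighbours across the cut.

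\emph{Step 4: the degree and matching bounds.} Suppose $x\in V_i$ has $d_{V_i}(x)\ge k$. Then (iii) holds with $M_{1-i}=\emptyset$, and Lemma~\ref{lem:211} gives $H_{s,t}$. Hence $\Delta(G[V_i])\le k-1$.

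Suppose instead $\nu(G[V_i])\ge k$, with a matching $e_1,\dots,e_k$ in $V_i$. Pick any $x\in V_{1-i}$ adjacent to both ends of each $e_j$. Every vertex has more than $2n/5$ neighbours on the other side, so the common neighbourhood in $V_{1-i}$ of the $\le 2k$ matching endpoints is large enough to contain such an $x$, avoiding the $O(k)$ vertices already used. Then $M_i=\{e_1,\dots,e_k\}$ is a matching in $G[N_{V_i}(x)]$, so $|M_i|\ge k\ge s$ and (\ref{long}) holds with the roles of $i$ and $1-i$ swapped. Lemma~\ref{lem:211} again gives $H_{s,t}$, a contradiction.

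The main obstacle is the last step. Finding a common neighbour $x$ of all $2k$ endpoints is not automatic from degrees exceeding $2n/5$ alone, since $2k$ sets of density $4/5$ in $V_{1-i}$ may fail to intersect. If it fails, I would instead take the vertex $x$ with the most triangles on the $e_j$ and use $E_{V_i}(x)$ in (\ref{long}): each edge with at least one endpoint in $N(x)$ counts there. This requires only that each $e_j$ meets $N(x)$. That does hold for a typical $x$, because each $e_j$ has $>4n/5$ cross-neighbours in total. Averaging over $x$ then yields the $k$ disjoint edges needed in $E_{V_i}(x)$.
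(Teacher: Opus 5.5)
\textbf{Gap in the matching bound.} Your Steps 1--3 and your degree bound follow the paper closely. Using the second alternative of (iii) with $M_{1-i}=\emptyset$ to get $\Delta(G[V_i])\le k-1$ is fine.

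There is one local error in Step 3. You cannot ``shrink $B_i$ to an independent subset.'' The discarded vertices would then lie in $V_i\setminus B_i$ while having $d_{V_{1-i}}\le 2n/5$, which violates (i). Instead, delete all edges inside $B_0$ and inside $B_1$, as the paper does. The resulting subgraph is still $H_{s,t}$-free and its cross degrees are unchanged. Each $x\in B_i$ keeps at least $n/10-O(1)-|B_i|\ge k$ neighbours in $V_i$.

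The genuine gap is the bound on $\nu(G[V_i])$. As you note, cross degrees $>2n/5$ do not give a common neighbour of the $2k$ endpoints, and your fallback does not repair this, for two reasons.
\begin{enumerate}
\item Condition (iii) also requires $d_{V_{1-i}}(x)+|M_i|\ge s$, where $M_i$ must be a matching inside $N_{V_i}(x)$. Edges of $E_{V_i}(x)$ with only one end in $N(x)$ yield only cycles of length at least $5$ through $x$, and do nothing for the $s$ triangles. You give no reason why some $x$ has $s$ of the $e_j$ entirely in its neighbourhood, and $d_{V_{1-i}}(x)$ may well be $0$.
\item The averaging is too weak. The set of $x\in V_{1-i}$ adjacent to an end of $e_j$ is only guaranteed to have more than $2n/5$ vertices, roughly $4/5$ of $V_{1-i}$. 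So averaging produces an $x$ meeting about $\tfrac45 k$ of the $e_j$, not all $k$.
\end{enumerate}

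The missing idea is to feed the just-proved bound $\Delta(G[V_j])\le k-1$ back into $\delta(G)\ge\lfloor n/2\rfloor$.
\begin{enumerate}
\item Every $u\in V_j$ then satisfies $d_{V_{1-j}}(u)\ge\lfloor n/2\rfloor-k+1$.
\item Hence $\lfloor n/2\rfloor-k+1\le|V_j|\le\lceil n/2\rceil+k-1$ for both $j$.
\item So each vertex has at most $2k-1$ non-neighbours on the opposite side.
\item The $2k$ endpoints of $e_1,\dots,e_k$ therefore have at least $|V_{1-i}|-2k(2k-1)>0$ common neighbours in $V_{1-i}$. Pick one and call it $z$.
\item Then $M_i=\{e_1,\dots,e_k\}$ is a matching in $G[N_{V_i}(z)]$ with $|M_i|=k\ge s$, so (iii) holds at $z$ and Lemma~\ref{lem:211} gives the contradiction.
\end{enumerate}
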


\begin{proof}
We prove the lemma through the following three claims.

\begin{claim}\label{bipa}
There exists a bipartition $V(G)=V_0\cup V_1$ such that $e(G[V_0])+e(G[V_1])<\frac{\gamma n^2}{4}$  and
$E_G(V_0,V_1)$ is a maximum cut of $G$, where $\gamma=[400(c(H_{s,t})+1)k]^{-2}$.
\end{claim}
\begin{proof}[\bf Proof of Claim~\ref{bipa}]
Since $\delta(G)\ge \left\lfloor\frac{n}{2}\right\rfloor$, we obtain
\begin{equation}\label{eq:edge-lower-bound}
e(G)=\frac12\sum_{v\in V(G)}d_G(v)
\ge \frac n2\left\lfloor\frac n2\right\rfloor
\ge \frac{n^2}{4}-\frac n4.
\end{equation}
By Lemma~\ref{thm:HQL}, for sufficiently large $n$,
\begin{equation}\label{eq:extremal-number}
\operatorname{ex}(n,H_{s,t})
=\left\lfloor\frac{n^2}{4}\right\rfloor+(k-1)^2
\le \frac{n^2}{4}+(k-1)^2.
\end{equation}
As $G$ is $H_{s,t}$-free, combining (\ref{eq:edge-lower-bound}) and (\ref{eq:extremal-number}) gives
$0\le \operatorname{ex}(n,H_{s,t})-e(G)\le \frac n4+(k-1)^2$. Hence, for every fixed $\eta>0$ and sufficiently
large $n$, we have 
\[
e(G)\geq \operatorname{ex}(n,H_{s,t})-\frac n4-(k-1)^2\geq\operatorname{ex}(n,H_{s,t})-\eta n^2.
\]
Consequently, for every fixed $\varepsilon>0$ and sufficiently large $n$, Lemma~\ref{es} gives a partition $V(G)=V_0\cup V_1$ such that $e(G[V_0])+e(G[V_1])<\varepsilon n^2$. 

Since $e(G)$ is fixed and $e(G[V_0])+e(G[V_1])<\varepsilon n^2$, we have $e_G(V_0,V_1)>e(G)-\varepsilon n^2$.
Passing to a maximum cut does not increase the total number of internal edges. Hence we may assume that
$E_G(V_0,V_1)$ is a maximum cut of $G$.
Taking $\varepsilon=\gamma/4$ completes the proof. 
\end{proof}

\begin{claim}\label{deta}
$\Delta(G[V_i])\le k-1$ for every $i\in\{0,1\}$.
\end{claim}
\begin{proof}[\bf Proof of Claim~\ref{deta}]
We prove the claim using Lemma~\ref{lem:211}. To this end, we first verify that $G$ satisfies Lemma~\ref{lem:211}~(i), (ii).  Condition~(ii) of Lemma~\ref{lem:211} is immediately satisfied by taking $U_j=V_j$ for $j\in\{0,1\}$.

By Claim~\ref{bipa}, $|V_0|+|V_1|=n$ and $e(G)-(e(G[V_0])+e(G[V_1]))\le |V_0||V_1|$. Using \eqref{eq:edge-lower-bound} and the bound
$e(G[V_0])+e(G[V_1])<\gamma n^2/4$, we have
\[
\begin{aligned}
\left(|V_i|-\frac n2\right)^2
=\frac{n^2}{4}-|V_i|\bigl(n-|V_i|\bigr)
=\frac{n^2}{4}-|V_i||V_{1-i}|
&=\frac{n^2}{4}-|V_0||V_1|\\
&\le \frac{n^2}{4}-e(G)+e(G[V_0])+e(G[V_1])\\
&\le \frac n4+\gamma n^2/4<\gamma n^2,
\end{aligned}
\]
for each $i\in\{0,1\}$. It follows that $\max\{|V_0|,|V_1|\}\le \left(\frac12+\sqrt{\gamma}\right)n$.

Set $\beta=(c(H_{s,t})+1)\sqrt{\gamma}=\frac{1}{400k}$ so that $\sqrt{\gamma}<\beta<1/10$.
For each $i\in\{0,1\}$, define \\$B_i=\{x\in V_i:d_{V_i}(x)>\beta n\}$.
Since $B_0$ and $B_1$ are disjoint, we have
\[
\begin{aligned}
\beta n|B_0\cup B_1|
=\beta n\bigl(|B_0|+|B_1|\bigr)
\le \sum_{i=0}^{1}\sum_{x\in B_i}d_{V_i}(x)
\le \sum_{i=0}^{1}\sum_{x\in V_i}d_{V_i}(x)
=2\bigl(e(G[V_0])+e(G[V_1])\bigr).
\end{aligned}
\]
Using $e(G[V_0])+e(G[V_1])<\gamma n^2/4$ and $\beta=(c(H_{s,t})+1)\sqrt{\gamma}$, we obtain
\[
|B_0\cup B_1|
\le \frac{2(e(G[V_0])+e(G[V_1]))}{\beta n}
<\frac{\gamma n}{2\beta}
=\frac{\sqrt{\gamma}\,n}{2(c(H_{s,t})+1)}
<\sqrt{\gamma}\,n.
\]

For each $i\in\{0,1\}$ and every $x\in V_i\setminus B_i$, the definition of $B_i$ gives $d_{V_i}(x)\le\beta n$.
Together with $\delta(G)\ge \left\lfloor\frac{n}{2}\right\rfloor$, this yields
\[
\begin{aligned}
d_{V_{1-i}}(x)
=d_G(x)-d_{V_i}(x)
\ge \left\lfloor\frac n2\right\rfloor-\beta n
&\ge \frac {n-1}{2}-\beta n\\
&=\frac{2n}{5}
  +\left(\frac{1}{10}-\beta\right)n-\frac12\\
&>\frac{2n}{5}.
\end{aligned}
\]

Note that $E_{G}(V_0, V_1)$ is a maximum cut of $G$. By the maximality of the cut, we have $d_{V_{1-i}}(x)\ge d_{V_i}(x)$
for every $x\in V_i$, $i\in\{0,1\}$.
Consequently, $d_{V_{1-i}}(x)\ge \frac12 d_G(x)\ge \frac12\left\lfloor\frac n2\right\rfloor>\frac n9.$

Next we show that $B_0=B_1=\emptyset$.
Suppose otherwise, and let $G'$ be obtained from $G$ by deleting all edges of $G[B_0]$ and $G[B_1]$. 
Then $G'$ is $H_{s,t}$-free, and the sets $B_0$ and $B_1$ are independent in $G'$. 
At this point, $G'$ satisfies  conditions~(i) and~(ii) of Lemma~\ref{lem:211}. 
Since $B_0\cup B_1\ne\emptyset$, choose $i\in\{0,1\}$ and $x\in B_i$.
By the definition of $B_i$ and the bound $|B_i|<\sqrt{\gamma}\,n$, we have
\[
d_{G'[V_i]}(x)
\ge d_{V_i}(x)-|B_i|
>(\beta-\sqrt{\gamma})n
\geq k
\]
for sufficiently large $n$. Thus condition~(iii) of Lemma~\ref{lem:211} holds. It follows that there is a copy of $H_{s,t}$ centered at $x$, contradicting that $G'$ is $H_{s,t}$-free. Hence $B_0=B_1=\emptyset$, which implies that $d_{V_{1-i}}(u)> \frac{2n}{5}$ for  every $i\in\{0,1\}$ and every $u\in V_i$.

Suppose that $d_{V_i}(x)\ge k$ for some $i\in\{0,1\}$ and $x\in V_i$.
For this application of Lemma~\ref{lem:211}, take $\widetilde B_i=\{x\}$, $\widetilde B_{1-i}=\emptyset$. 
These sets are independent and satisfy $|\widetilde B_0\cup \widetilde B_1|=1<\sqrt{\gamma}\,n$
for sufficiently large $n$. Together with $d_{V_{1-i}}(u)> \frac{2n}{5}$ for  every $i\in\{0,1\}$ and every $u\in V_i$, this verifies condition~(i) of  Lemma~\ref{lem:211} with $\widetilde B_0$ and $\widetilde B_1$. 

Note that condition~(ii) holds with $U_j=V_j$ for $j\in\{0,1\}$. Since $x\in \widetilde B_i$ and $d_{V_i}(x)\ge k$, condition~(iii) of  Lemma~\ref{lem:211} holds. Therefore  there is a copy of $H_{s,t}$ centered at $x$, contradicting the assumption that $G$ is $H_{s,t}$-free.
Hence $\Delta(G[V_i])\le k-1$ for every $i\in\{0,1\}$.

\end{proof}

\begin{claim}\label{matching}
$\nu(G[V_i])\le k-1$ for every $i\in\{0,1\}$.
\end{claim}

\begin{proof}[\bf Proof of Claim~\ref{matching}]

Suppose for a contradiction that the claim is false. Without loss of generality, assume that $G[V_1]$ contains a matching $M_1$ consisting of exactly $k$ edges.

Similarly, we shall apply Lemma~\ref{lem:211} to show that $G$ admits  a copy of $H_{s,t}$.  To this end, we first verify that $G$ satisfies Lemma~\ref{lem:211}~(i), (ii).

By Claim~\ref{bipa} and Claim~\ref{deta}, $G$ has a bipartition $V_0\cup V_1$ with $\max\{|V_0|,|V_1|\}\le\bigl(\tfrac12+\sqrt{\gamma}\bigr)n$, and
every vertex of $G$ has more than $2n/5$ neighbors in the opposite part. Set $B_0=B_1=\emptyset$ and $U_j=V_j$ for $j\in\{0,1\}$. Hence conditions (i) and (ii) of Lemma~\ref{lem:211} hold.

By Claim~\ref{deta} and $\delta(G)\ge \left\lfloor n/2\right\rfloor$, we have
$d_{V_{1-j}}(u)=d_G(u)-d_{V_j}(u)\ge \left\lfloor\frac n2\right\rfloor-k+1$
for $j\in\{0,1\}$ and every $u\in V_j$.
It follows that $|V_j|\geq \left\lfloor\frac n2\right\rfloor-k+1$ for $j\in\{0,1\}$. 
Since $|V_0|+|V_1|=n$, we obtain
\begin{equation}\label{V}
\left\lfloor\frac n2\right\rfloor-k+1
\le |V_j|
\le \left\lceil\frac n2\right\rceil+k-1.
\end{equation}
Consequently, every $u\in V_j$ has at most
\begin{equation}\label{non}
|V_{1-j}|-d_{V_{1-j}}(u)
\le
\left\lceil\frac n2\right\rceil
-\left\lfloor\frac n2\right\rfloor+2k-2
\le 2k-1
\end{equation}
non-neighbors in the opposite part. 

Note that $V(M_1)\subseteq V_1$. By (\ref{V}) and (\ref{non}), we have
\[
\begin{aligned}
\left|\bigcap_{v\in V(M_1)}N_{V_0}(v)\right|
&\ge |V_0|
-\sum_{v\in V(M_1)}
  \bigl(|V_0|-d_{V_0}(v)\bigr)
\ge \left\lfloor\frac n2\right\rfloor-k+1
      -2k(2k-1)>0
\end{aligned}
\]
for sufficiently large $n$. Choose a vertex $z_0\in \bigcap\limits_{v\in V(M_1)}N_{V_0}(v)$. Then $z_0\in V_0\setminus B_0$ and $M_1$ is a
matching in $G[N_{V_1}(z_0)\setminus B_1]$. Note that $|M_1|=k$. It is easy to see that 
$d_{V_0}(z_0)+|M_{1}|\ge k\geq s$. Moreover, \eqref{long} holds for $G$ with center $z_0$ and matching $M_1$.
Thus $G$ satisfies condition (iii) of Lemma~\ref{lem:211}.

Lemma~\ref{lem:211} therefore yields a copy of $H_{s,t}$ centered at $z_0$, contradicting the assumption that $G$ is $H_{s,t}$-free.
Hence $\nu(G[V_i])\le k-1$ for every $i\in\{0,1\}$.
\end{proof}
\end{proof}

\begin{lemma}\label{lem:high-degree}
Fix $H_{s,t}$ with $k=s+t\ge3$. Suppose that $c$ is an edge-coloring of $K_n$ containing no rainbow copy of
$H_{s,t}$ and $n\geq n_1(H_{s,t})$. If every representing graph $G\in\calR(c,K_n)$ satisfies $\delta(G)\ge\left\lfloor\frac n2\right\rfloor$,
then $|c(E(K_n))|\leq t_2(n)+(k-2)^2+1$.
\end{lemma}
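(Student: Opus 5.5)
Assume $|c(E(K_n))|\ge t_2(n)+(k-2)^2+2$ and fix a representing graph $G\in\calR(c,K_n)$. Since $\delta(G)\ge\lfloor n/2\rfloor$ and $G$ is rainbow and $H_{s,t}$-free, Lemma~\ref{lem:structure} gives a bipartition $V(G)=V_0\cup V_1$ with $\Delta(G[V_i])\le k-1$ and $\nu(G[V_i])\le k-1$ for $i\in\{0,1\}$. Then $e(G)\le |V_0||V_1|+e(G[V_0])+e(G[V_1])$. The plan is to show that the inside edges cannot be too numerous, by exploiting the freedom to change representing edges inside a color class. Every modified graph still has minimum degree at least $\lfloor n/2\rfloor$ by hypothesis, so Lemma~\ref{lem:structure} applies to it as well.

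\textbf{Step 1 (coarse counting).} Lemma~\ref{thm:CH} gives $e(G[V_i])\le f(k-1,k-1)\le k(k-1)$. Using $|V_0||V_1|\le t_2(n)$, we get $e(G)\le t_2(n)+2k(k-1)$, so $e(G[V_0])+e(G[V_1])\ge (k-2)^2+2$. Combined with \eqref{V} and \eqref{non}, this shows that almost all cross pairs are edges of $G$, and every vertex misses at most $2k-1$ vertices on the other side.

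\textbf{Step 2 (swapping).} Take an inside edge $e=uv\in E(G[V_i])$. Its color class contains no other edge of $G$. If it also contains a cross pair, replace $e$ by that pair to get $G'\in\calR(c,K_n)$. If instead it contains another inside pair, swap to that pair and apply Lemma~\ref{lem:structure} to $G'$. The bipartition of $G'$ must essentially agree with $V_0\cup V_1$, since the cut is nearly complete, so we obtain constraints on how colors of inside edges can appear.

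\textbf{Step 3 (main obstacle).} The target is to show that the inside edges essentially form a $K_{k-1,k-1}$-like configuration in one part minus structure, and to bound the inside contribution plus the missing cross edges by $(k-2)^2+1$. The idea is to exhibit a rainbow $H_{s,t}$ once the inside edges exceed this bound. Pick a center $x$ of large inside degree, with $d_{V_i}(x)$ up to $k-1$, together with a matching of inside edges in the other part inside $N(x)$. Then use recoloring: the colors on edges within the common neighbourhood can be exchanged, so some representing graph attains $d_{V_i}(x)+|M_{1-i}|\ge k$. Condition (iii) of Lemma~\ref{lem:211} then gives a copy of $H_{s,t}$ in that representing graph, which is rainbow, a contradiction.

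I expect the hard part to be the case analysis that converts ``many inside edges'' into such a center $x$. The surplus $(k-2)^2+2$ over $t_2(n)$ must be used together with the degree bound $k-1$ and the matching bound $k-1$ in both parts, and the missing cross edges must be accounted for via \eqref{non}. I would first try to handle this by applying Lemma~\ref{thm:CH} to $G[V_0]\cup G[V_1]$ after deleting one suitable vertex.
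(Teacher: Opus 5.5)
Your proposal has a genuine gap. Steps 1 and 2 only recover what Lemma~\ref{lem:structure} and a trivial count already give. Step 3 is where the bound $t_2(n)+(k-2)^2+1$ has to be earned, and it is deferred to an unspecified case analysis.

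The mechanism you suggest there does not work as stated, for two reasons.
\begin{itemize}
\item No argument about the inside edges of a single representing graph can reach the bound. The graph $T_2(n)$ with $K_{k-1,k-1}$ embedded in one part is $H_{s,t}$-free (it is extremal by Lemma~\ref{thm:HQL}). It has minimum degree $\lfloor n/2\rfloor$ and $(k-1)^2\ge (k-2)^2+2$ inside edges. So the contradiction must come from the colors on edges of $K_n$ outside $G$.
\item ``Exchanging colors'' to raise $d_{V_i}(x)+|M_{1-i}|$ to $k$ is not free. Every edge of $K_n[N_{V_{1-i}}(x)]$ you swap in forces deletion of the unique $G$-edge of its color. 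That edge may be one of the inside edges at $x$ or one of the matching edges you are counting. Moreover, if $K_n[V_{1-i}]$ carries only one or two colors, you can add at most that many edges there.
\end{itemize}
Nothing in your plan explains where the specific quantity $(k-2)^2$ enters.

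What is missing is a global argument about the coloring. Let $S_i\subseteq V_i$ be the vertices joined in $G$ to every vertex of $V_{1-i}$; all but $O_k(1)$ vertices lie in $S_0\cup S_1$.

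\emph{Step (a).} $K_n[S_i]$ has no rainbow matching of size $k$. Otherwise you swap those $k$ edges into $G$ and pick $z\in V_{1-i}$ still adjacent to all their endpoints; then Lemma~\ref{lem:211}(iii) holds at $z$. Hence, after removing a maximal rainbow matching, the rest of $K_n[S_i]$ uses at most $k-1$ colors. This gives a monochromatic matching $M_i$ of linear size in some color $\alpha_i$.

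\emph{Step (b).} Delete the $G$-edges of colors $\alpha_0,\alpha_1$ to get $G_0$, with $e(G_0)\ge t_2(n)+(k-2)^2$. Let $\widehat H$ be $H_{s,t}$ minus a shortest cycle. Then $G_0$ is $\widehat H$-free: a copy of $\widehat H$ centered in $V_i$ closes up to a rainbow $H_{s,t}$ using an unused edge of $M_{1-i}$ and cross edges.

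\emph{Step (c).} $\widehat H$ has the form $H_{s',t'}$ with $s'+t'=k-1$ and $t'\ge1$. So Lemma~\ref{thm:HQL} forces $\alpha_0\ne\alpha_1$, and $G_0$ is $T_2(n)$ plus $K_{k-2,k-2}$ (or $3K_3$) in one part, aligned with $V_0\cup V_1$.

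\emph{Step (d).} Add back one edge of each $M_i$. This gives a representing graph with a center of in-part degree $1$ and a matching of size $k-1$ in its opposite neighbourhood. Lemma~\ref{lem:211} then yields a rainbow $H_{s,t}$.

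This reduction to the Tur\'an problem for $\widehat H$, fed by the monochromatic matchings, is what turns the surplus $(k-2)^2+2$ into a contradiction. Your Step 3 needs to be replaced by something of this kind.
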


\begin{proof}
Suppose to the contrary that $|c(E(K_n))|\ge t_2(n)+(k-2)^2+2$.
Choose $G\in\calR(c,K_n)$. Then $G$ is rainbow and $H_{s,t}$-free, and  $\delta(G)\ge\lfloor \frac n2\rfloor$.
By Lemma~\ref{lem:structure}, there is a partition
$V(G)=V_0\cup V_1$ such that $\Delta(G[V_i])\le k-1$ and $\nu(G[V_i])\le k-1$ for each $i\in\{0,1\}$.
Consequently, by Lemma~\ref{thm:CH}, we have $e(G[V_i])\le f(k-1,k-1)$.

As shown in the proof of Lemma~\ref{lem:structure}, we have
\begin{equation}\label{eq:high-part-sizes}
\left\lfloor\frac n2\right\rfloor-k+1
\le |V_i|
\le \left\lceil\frac n2\right\rceil+k-1
\qquad(i\in\{0,1\}).
\end{equation}
Since $|V_0||V_1|\le t_2(n)$ and $e(G)=|c(E(K_n))|\ge t_2(n)+(k-2)^2+2$, we obtain
\[
\begin{aligned}
|V_0||V_1|-e_G(V_0,V_1)
&=|V_0||V_1|-e(G)+e(G[V_0])+e(G[V_1])\\
&\le e(G[V_0])+e(G[V_1])-(k-2)^2-2\\
&\le 2f(k-1,k-1).
\end{aligned}
\]

For each $i\in\{0,1\}$, define $S_i=\{x\in V_i:V_{1-i}\subseteq N_G(x)\}$.
By definition of $S_i$, every $x\in V_i\setminus S_i$ satisfies $|V_{1-i}|-d_{V_{1-i}}(x)\ge1$.
Therefore,
\[
\begin{aligned}
|V_i\setminus S_i|
\le \sum_{x\in V_i}
\bigl(|V_{1-i}|-d_{V_{1-i}}(x)\bigr)
=|V_0||V_1|-e_G(V_0,V_1)
\le 2f(k-1,k-1).
\end{aligned}
\]
Together with \eqref{eq:high-part-sizes}, this gives $|S_i|=\frac n2+O_k(1)$.

\begin{claim}\label{cl:high-rainbow-matching}
For each $i\in\{0,1\}$, the coloring induced on
$K_n[S_i]$ contains no rainbow matching of size $k$.
\end{claim}

\begin{proof}[\bf Proof of Claim~\ref{cl:high-rainbow-matching}]
Suppose otherwise. By symmetry, assume that $K_n[S_1]$ contains a rainbow matching $M=\{e_1,\ldots,e_k\}$.
For each $j\in[k]$, let $g_j$ be the unique edge of $G$ with color $c(e_j)$, and let
$G^*=\bigl(G-\{g_1,\ldots,g_k\}\bigr)\cup\{e_1,\ldots,e_k\}$.
Since the colors of $e_1,\ldots,e_k$ are distinct, $G^*$ is a representing graph.

We shall apply Lemma~\ref{lem:211} to show that $G^*$ admits  a copy of $H_{s,t}$.  To this end, we first verify that $G^*$ satisfies Lemma~\ref{lem:211}~(i), (ii). Condition~(ii) of Lemma~\ref{lem:211} is immediately satisfied by taking $U_j=V_j$ for $j\in\{0,1\}$.

Take $B_0=B_1=\emptyset$ in Lemma~\ref{lem:211}. These sets are independent and satisfy $|B_0\cup B_1|=0<\sqrt{\gamma}\,n$.
By \eqref{eq:high-part-sizes}, for sufficiently large $n$, we have
\[
\max\{|V_0|,|V_1|\}
\le \left\lceil\frac n2\right\rceil+k-1
\le \left(\frac12+\sqrt{\gamma}\right)n, 
\]
where $\gamma=[400(c(H_{s,t})+1)k]^{-2}$.

Furthermore, using $|V_0||V_1|-e_G(V_0,V_1)\le  2f(k-1,k-1)$ and the fact that at most $k$ edges of $G$ are deleted, for sufficiently large $n$, we obtain
\[
\begin{aligned}
|N_{G^*}(u)\cap V_{1-j}|
\ge |N_G(u)\cap V_{1-j}|-k
&\ge |V_{1-j}|-2f(k-1,k-1)-k\\
&\ge \left\lfloor\frac n2\right\rfloor-2f(k-1,k-1)-2k+1\\
&>\frac{2n}{5}
\end{aligned}
\]
for $j\in\{0,1\}$ and every $u\in V_j$. Thus condition~(i) of Lemma~\ref{lem:211} holds.

By the definition of $S_1$, every vertex of $V_0$ is adjacent to every endpoint of $e_1,\ldots,e_{k}$ in $G$. A vertex of $V_0$ can lose this property in $G^*$ only if it is incident with a deleted edge in $E_{G}(V_0,V_1)$. Hence at most $k$ vertices of $V_0$ lose this property. Since $|V_0|\geq\left\lfloor\frac n2\right\rfloor-k+1>k$ for sufficiently large $n$, we can choose a vertex $z\in V_0$ adjacent in $G^*$ to both endpoints of every $e_i$ for $i\in[k]$.

By the choice of $z$, we have $z\in V_0\setminus B_0$, and $M$ is a matching in $G^*[(N_{G^*}(z)\cap V_1)\setminus B_1]$.
Since $|M|=k$, we have  $d_{G^*[V_0]}(z)+|M|\ge k\ge s$. 
Moreover, \eqref{long} holds for $G^*$ with center $z$ and matching $M$. Hence condition~(iii) is satisfied.

Applying Lemma~\ref{lem:211} to $G^*$ with center $z$, we obtain a copy of $H_{s,t}$.
Since $G^*$ is a representing graph, this copy is rainbow, contradicting the assumption on $c$.
\end{proof}

\begin{claim}\label{mono}
There exists a constant $c_1=c_1(k)$ such that, for every $i\in\{0,1\}$, the colored complete graph
$K_n[S_i]$ contains a monochromatic matching $M_i$ satisfying $|M_i|\ge\frac{n/2-c_1}{4(k-1)}$.
\end{claim}

\begin{proof}[\bf Proof of Claim~\ref{mono}]

For each $i\in\{0,1\}$, choose a maximal rainbow matching $I_i$ in $K_n[S_i]$, and let $T_i=S_i\setminus V(I_i)$.
By Claim~\ref{cl:high-rainbow-matching}, $|I_i|\le k-1$, so there exists a constant $c_1=c_1(k)$ such that
$|T_i|=|S_i|-|V(I_i)|\geq \frac n2-c_1$.

By the maximality of $I_i$, every color appearing on $K_n[T_i]$ already appears on $I_i$.
Hence at most $k-1$ colors occur on $K_n[T_i]$. By the pigeonhole principle, there exists a color $\alpha_i$ that appears on at least
$\frac{1}{k-1}\binom{|T_i|}{2}$ edges of $K_n[T_i]$.

Let $F_i$ be the spanning subgraph of $K_n[T_i]$ formed by the edges of color $\alpha_i$, and let $M_i$ be a maximum matching in $F_i$.
Since $V(M_i)$ is a vertex cover of $F_i$,
\[
2|M_i|(|T_i|-1)
\ge e(F_i)
\ge \frac{1}{k-1}\binom{|T_i|}{2}.
\]
Therefore,
\[
|M_i|\ge \frac{|T_i|}{4(k-1)}\ge\frac{n/2-c_1}{4(k-1)}.
\]
Hence $M_i$ is a monochromatic matching of the required size.
\end{proof}

For each $i\in\{0,1\}$, choose a monochromatic matching $M_i$ in $K_n[S_i]$ as in Claim~\ref{mono}, and denote its color by $\alpha_i$.
Let $G_0$ be obtained from $G$ by deleting the edges of colors $\alpha_0$ and $\alpha_1$,
deleting only one edge if $\alpha_0=\alpha_1$. Note that the graph $G$ is a representing graph.
Then
\begin{equation}\label{G_0}
e(G_0)\ge e(G)-2\ge t_2(n)+(k-2)^2.
\end{equation}

Let $v$ be the center of $H_{s,t}$, and let $C$ be a shortest cycle of $H_{s,t}$ with length $l$.
Define  $\widehat H=H_{s,t}-(V(C)\setminus\{v\})$.
Then $\widehat H$ has $k-1$ cycles and contains at least one odd cycle of length at least $5$.

\begin{claim}\label{cl:high-deleted-branch}
The graph $G_0$ is $\widehat H$-free.
\end{claim}

\begin{proof}[\bf Proof of Claim~\ref{cl:high-deleted-branch}]
Suppose to the contrary that $G_0$ contains a copy $R$ of $\widehat H$ with center $u$. By symmetry, assume that $u\in V_0$. 
Note that $G_0$ is obtained from $G$ by deleting at most two edges. Denote by $L$ the set of all endpoints of the deleted edges. Thus, $|L|\leq4$.

By Claim~\ref{mono}, $M_1$ is a monochromatic matching of color $\alpha_1$ satisfying $|M_1|\ge\frac{n/2-c_1}{4(k-1)}$. Note that $|V(R)\cup L|$ is bounded independently of $n$. Hence, for sufficiently large $n$, we can choose an edge $xy\in M_1$ such that $\{x,y\}\cap\bigl(V(R)\cup L\bigr)=\varnothing$.

By the definition of $S_i$, every vertex of $S_i\setminus L$ is adjacent in $G_0$ to every
vertex of $V_{1-i}$. In particular, $ux,uy\in E(G_0)$. Moreover, no edge of $G_0$ has color $\alpha_1$.
Since $c(xy)=\alpha_1$, the graph $G_0+xy$ is rainbow.

We now construct a cycle $C'$ of length $l$ such that $V(C')\cap V(R)=\{u\}$.
If $l=3$, take $C'=uxyu$.
If $l\ge5$, put $r=(l-3)/2$.
Since $|S_i|=\frac n2+O_k(1)$ and $l$ is fixed, for sufficiently large $n$ we can choose distinct vertices
$a_1,\ldots,a_r\in S_0\setminus\bigl(V(R)\cup L\bigr)$ and $b_1,\ldots,b_r\in S_1\setminus\bigl(V(R)\cup L\cup\{x,y\}\bigr)$.
Then
\[
C'=uxya_1b_1a_2b_2\cdots a_rb_ru
\]
is a cycle of length $l$ in $G_0+xy$.
Indeed, every edge of this cycle other than $xy$ joins a vertex of $S_i\setminus L$ to a vertex of
$V_{1-i}$ for some $i\in\{0,1\}$, and hence belongs to $G_0$.

In both cases, $V(C')\cap V(R)=\{u\}$. Thus $R\cup C'$ is a copy of $H_{s,t}$ in the
rainbow graph $G_0+xy$, contradicting the assumption on $c$.
Therefore, $G_0$ is $\widehat H$-free.
\end{proof}

By (\ref{G_0}), Claim~\ref{cl:high-deleted-branch} and Lemma~\ref{thm:HQL},
\[
t_2(n)+(k-2)^2
\le e(G_0)
\le \operatorname{ex}(n,\widehat H)
=t_2(n)+(k-2)^2.
\]
Hence $e(G_0)=t_2(n)+(k-2)^2$.
Furthermore, $\alpha_0\ne\alpha_1$.

Thus $G_0\in EX(n,\widehat H)$. By Lemma~\ref{thm:HQL}, $G_0$ is obtained from $T_2(n)$ by embedding a graph $J$ into one of its parts.
Here $J\cong K_{k-2,k-2}$, with the additional possibility $J\cong3K_3$ when $\widehat H=H_{3,1}$. In either case, $\nu(J)=k-2$.

For convenience, let $W_0\cup W_1$ be the balanced partition of $V(T_2(n))$. Clearly, $W_0\cup W_1$ is also a balanced partition of $V(G_0)$. 
The claim below describes the relationship between the new partition $W_0\cup W_1$ of $V(G_0)$ and its original partition $V_0\cup V_1$.

\begin{claim}\label{partitions}
There exists a permutation $\sigma$ of $\{0,1\}$ such that $V_i=W_{\sigma(i)}$ for every $i\in\{0,1\}$.
\end{claim}

\begin{proof}[\bf Proof of Claim~\ref{partitions}]
By the bound on $e(G[V_i])$ established above,
we have
\begin{equation}\label{eq:high-partition-internal}
e(G_0[V_i])\le e(G[V_i])\le f(k-1,k-1)
\qquad\text{for every }i\in\{0,1\}.
\end{equation}

For each $i\in\{0,1\}$, choose $\sigma(i)\in\{0,1\}$ such that $|V_i\cap W_{\sigma(i)}| =\max\limits_{j\in\{0,1\}}|V_i\cap W_j|$.
This defines a map $\sigma:\{0,1\}\to\{0,1\}$.
Since $V_i\cap W_0$ and $V_i\cap W_1$ partition $V_i$, \eqref{eq:high-part-sizes} gives
\[
|V_i\cap W_{\sigma(i)}|
\ge \frac{|V_i|}{2}
\ge \frac12\left(
\left\lfloor\frac n2\right\rfloor-k+1
\right).
\]

Suppose that $V_i\cap W_{1-\sigma(i)}\ne\emptyset$.
Since $G_0$ contains all edges between
$W_0$ and $W_1$, we obtain
\[
\begin{aligned}
e(G_0[V_i])\ge
|V_i\cap W_{\sigma(i)}||V_i\cap W_{1-\sigma(i)}|
&\ge |V_i\cap W_{\sigma(i)}|\\
&\ge \frac12\left(\left\lfloor\frac n2\right\rfloor-k+1\right)\\
&>f(k-1,k-1)
\end{aligned}
\]
for sufficiently large $n$.
This contradicts \eqref{eq:high-partition-internal}.
Hence $V_i\subseteq W_{\sigma(i)}$ for every
$i\in\{0,1\}$.

If $\sigma(0)=\sigma(1)=j$, then $V(G_0)=V_0\cup V_1\subseteq W_j$, contradicting $W_{1-j}\ne\emptyset$.
Therefore $\sigma(0)\ne\sigma(1)$, so $\sigma$ is a permutation of $\{0,1\}$.
Since both partitions cover $V(G_0)$, $V_i\subseteq W_{\sigma(i)}$ imply $V_i=W_{\sigma(i)}$ for every $i\in\{0,1\}$.
\end{proof}

By Claim~\ref{partitions}, after relabeling $W_0$ and $W_1$, we may assume that
$V_i=W_i$ for $i\in\{0,1\}$. By symmetry, we may further assume that $J\subseteq G_0[V_1]$. 
Note that $\nu(J)=k-2$, so we can choose a matching $Q\subseteq E(J)$ of size $k-2$.
Choose edges $e_i\in M_i$ for $i\in\{0,1\}$ such that $V(e_i)\cap V(Q)=\emptyset$.

Since $\alpha_0\neq\alpha_1$ and neither occurs on $G_0$, the graph $G_0+e_0+e_1\in \calR(c,K_n)$.
We apply Lemma~\ref{lem:211} to show that $G_0+e_0+e_1$ admits  a copy of $H_{s,t}$.
Since $G_0[V_0,V_1]\cong T_2(n)$, conditions~(i) and~(ii) hold with $B_0=B_1=\emptyset$ and $U_i=V_i$ for $i\in\{0,1\}$.

Write $e_0=xy$. Since $B_0=B_1=\emptyset$, we have $x\in V_0\setminus B_0$.
Moreover, $x$ is adjacent in $G_0+e_0+e_1$ to every vertex of $V_1$, and $e_1$ is disjoint from $V(Q)$.
Thus $M=Q\cup\{e_1\}$ is a matching of size $k-1$ in $(G_0+e_0+e_1)\bigl[(N_{G_0+e_0+e_1}(x)\cap V_1)\setminus B_1\bigr]$.

Since $G_0[V_0]$ has no edges, $d_{(G_0+e_0+e_1)[V_0]}(x)=1$, and therefore $d_{(G_0+e_0+e_1)[V_0]}(x)+|M|=1+(k-1)=k\ge s$.
(\ref{long}) in condition~(iii) follows as well.
Lemma~\ref{lem:211} now gives a copy of $H_{s,t}$ centered at $x$ in $G_0+e_0+e_1$, a contradiction.
\end{proof}

\section{Proof of Theorem~\ref{thm:main}}
In this section we prove Theorem~\ref{thm:main}. We begin by deriving the lower bound for $\ar(n,H_{s,t})$, then establish its upper bound. For ease of reading, we restate the theorem below:\vspace{2mm}

\noindent\textbf{Theorem~1.1.} 
{Let $s\geq 0$ and $t\geq 1$ be fixed integers with $s+t\geq2$. There exists $n_0(H_{s,t})\in \mathbb{N}$ such that for all $n\geq n_0(H_{s,t})$,
$\ar(n, H_{s,t})= t_2(n) + (s+t-2)^2 + 1$.}

\begin{proof}[\bf Proof of Theorem~\ref{thm:main}]
Put $k=s+t$. We first prove the lower bound.

Let $G$ be obtained from $T_2(n)$ by adding all edges between two disjoint sets $A$ and $B$ in one partite set, where $|A|=|B|=k-2$.
Assign a distinct color to each edge of $G$, and color all remaining edges of $K_n-E(G)$ with one additional color.
This coloring uses $t_2(n)+(k-2)^2+1$ colors.

Suppose for a contradiction that this coloring contains a rainbow copy of $H_{s,t}$ with center $z$.
Since all edges outside $G$ have the same color, this copy contains at most one edge outside $G$.
Its $k$ cycles are edge-disjoint, so at least $k-1$ of them lie entirely in $G$.

Both $G-A$ and $G-B$ are bipartite, so every odd cycle in $G$ meets both $A$ and $B$.
Since $A\cap B=\emptyset$, we may assume by symmetry that $z\notin A$.
These $k-1$ cycles are pairwise vertex-disjoint outside $z$, and hence meet $A$ in at least $k-1$ distinct vertices.
This contradicts $|A|=k-2$. Therefore, the coloring contains no rainbow copy of
$H_{s,t}$, which proves $\ar(n,H_{s,t})\ge t_2(n)+(k-2)^2+1.$

We now prove the upper bound.

Suppose to the contrary that there is an edge-coloring $c$ of $K_n$ that contains no rainbow copy of
$H_{s,t}$ with at least $t_2(n)+(k-2)^2+2$ colors and $n\geq n_0(H_{s,t})$, where $n_0(H_{s,t})\gg n_1(H_{s,t})$. 

\medskip
\noindent\textbf{Case 1: $k=2$.}

Choose a representing graph $G\in\calR(c,K_n)$. Then $G$ is $H_{s,t}$-free.
By Lemma~\ref{thm:HQL}, we obtain
\[
t_2(n)+2
\le |c(E(K_n))|
=e(G)
\le \operatorname{ex}(n,H_{s,t})
=t_2(n)+1,
\]
a contradiction. Thus $\ar(n,H_{s,t})\le t_2(n)+1$.

\medskip
\noindent\textbf{Case 2: $k\geq 3$.}

If every representing graph in $\mathcal R(c,K_n)$ has minimum degree at least $\lfloor \frac n2\rfloor$,
then by Lemma~\ref{lem:high-degree}, the coloring $c$ uses at most $t_2(n)+(k-2)^2+1$ colors, contradicting the assumption that $c$ uses at least $t_2(n)+(k-2)^2+2$ colors. We may therefore suppose that there exists a representing graph $L^n\in \mathcal R(c,K_n)$ with $\delta(L^n)<\lfloor \frac n2\rfloor$. Thus there is a vertex $u_n\in V(K_n)$ satisfying $d_{L^n}(u_n)\le \lfloor \frac n2\rfloor-1$.

Let $G^n=K_n$ and $ G^{n-1}=G^n-u_n$, and let $c_{n-1}$ be the coloring of $G^{n-1}$  inherited from $c$.
Since $L^n$ contains exactly one edge of every color, deleting $u_n$ can remove at most $d_{L^n}(u_n)$ distinct colors from the coloring. Consequently, $G^{n-1}$ has at least $t_2(n)+(k-2)^2+2 -\lfloor\frac n2\rfloor+1$ colors. Since $t_2(n)-t_2(n-1) = \left\lfloor\frac{n}{2}\right\rfloor$, the edge-colored complete graph $G^{n-1}$ admits at least $t_2(n-1)+(k-2)^2+3$ distinct colors.

If every representing graph in $\mathcal R(c_{n-1},G^{n-1})$ has minimum degree at least $\lfloor \frac {n-1}{2} \rfloor$,
then Lemma~\ref{lem:high-degree} implies that $c_{n-1}$ uses at most $t_2(n-1)+(k-2)^2+1$ colors, which yields a contradiction.  Hence there exists a representing graph $L^{n-1}\in \mathcal R(c_{n-1},G^{n-1})$ and a vertex $u_{n-1}\in V(G^{n-1})$ such that $d_{L^{n-1}}(u_{n-1})\le\left\lfloor\frac{n-1}{2}\right\rfloor-1$.

Repeating this argument, we may construct a sequence of edge-colored complete graphs
\[
G^n,G^{n-1},\ldots,G^{n-\ell}
\]
such that the number of colors of $G^{n-\ell}$ is at least $t_2(n-\ell)+(k-2)^2+2+\ell$, which is based on $n_0(H_{s,t})\gg n_1(H_{s,t})$. Since an edge-coloring of $G^{n-\ell}$ has at most $\binom{n-\ell}{2}$ colors, this will yield a contradiction for large $\ell$.

Therefore $\ar(n,H_{s,t}) \leq t_2(n)+(s+t-2)^2+1$.
This completes the proof.

\end{proof}

\section*{Disclosure statement}
The author did not report any potential conflict of interest.
\section*{Data availability}
No data is available during the current study.

\end{document}